\newtheorem{theorem}{Theorem}
\newtheorem{proposition}[theorem]{Proposition}
\newtheorem{corollary}[theorem]{Corollary}
\newtheorem{remark}{Remark}
\newtheorem{definition}{Definition}
\newenvironment{proof}{\begin{trivlist}\item{}\normalfont\textit{Proof\ }}{\hfill$\square$\end{trivlist}}
\newenvironment{example}{\begin{trivlist}\item{}\normalfont\textbf{Example\ }}{\end{trivlist}}
\newcommand{\E}{\mathcal{E}}
\newcommand{\I}{\mathbf{I}}
\newcommand{\Q}{\mathbf{Q}}
\newcommand{\arr}{{\cdot \rightarrow \cdot}}
\newcommand{\AbGp}{\textbf{AbGp}}
\newcommand{\Cat}{\textbf{Cat}}
\newcommand{\Ch}{\textbf{Ch}}
\newcommand{\cocat}[1][-]{\textbf{CoCat}(#1)}
\newcommand{\colax}{\textrm{colax}}
\newcommand{\coeqrel}[1][-]{\textbf{CoEqRel}(#1)}
\newcommand{\copreord}[1][-]{\textbf{CoPreOrd}(#1)}
\newcommand{\mono}[1][-]{\textbf{Mono}(#1)}
\newcommand{\op}{\textit{op}}
\newcommand{\PreOrd}{\textbf{PreOrd}}
\newcommand{\SAbGp}{\textbf{SAbGp}}
\newcommand{\SSet}{\textbf{SSet}}
\newcommand{\Top}{\textbf{Top}}
\renewcommand{\equiv}{\simeq}
\newcommand{\into}{\rightarrowtail}
\begin{document}


\author{Peter LeFanu Lumsdaine \\ \small \tt plumsdai@andrew.cmu.edu}

\title{A Small Observation on Co-categories}
\date{19 June, 2008}


\maketitle

\begin{abstract}
Various concerns suggest looking for internal co-categories in categories with strong logical structure.  It turns out that in any coherent category $\E$, all co-categories are co-equivalence relations.
\end{abstract}


\begin{definition}Let $\E$ be any category.  An (internal) \emph{co-category} $\Q$ in $\E$ is an internal category in $\E^\op$, i.e.\ objects and morphisms in $\E$
$$ \xymatrix{ Q^0 \ar@/^/[r]^l \ar@/_/[r]_r & Q^1 \ar[l]|i \ar^-q[r] & Q^1 +_{Q^0} Q^1 } $$
such that the following diagrams commute:
$$\qquad \xymatrix{Q^0 \ar^l[r] \ar_l[d] & Q^1 \ar|q[d] & A^0 \ar_r[l] \ar^r[d] \\
Q^1 \ar_-{\nu_1}[r] & Q^1 +_{Q^0} Q^1 & Q^1 \ar^-{\nu_2}[l] } \quad
\xymatrix{Q^1 \ar^-q[r] \ar_q[d] & Q^1 +_{Q^0} Q^1 \ar^{[q,\nu_3]}[d] \\
Q^1 +_{Q^0} Q^1 \ar^-{[\nu_1,q]}[r] & Q^1 +_{Q^0} Q^1 +_{Q^0} Q^1 } $$
$$ \xymatrix{ Q^0 \ar^l[r] \ar_l[dr] & Q^1 \ar^i[d] & Q^0 \ar_r[l] \ar^r[dl] \\
& Q^0} \quad
\xymatrix{ & Q^1 \ar_1[dl] \ar^q[d] \ar^1[dr] & \\
Q^1 & Q^1 +_{Q^0} Q^1 \ar^-{\ [li,1]}[l] \ar_-{[1,ri]\ }[r] & Q^1} \ $$ 

\end{definition}

\begin{definition}A co-category $\Q$ is a \emph{co-preorder} if the maps $l,r$ are jointly epimorphic.

A co-category $\Q$ is a \emph{co-groupoid} if there is a map $s:Q^1 \rightarrow Q^1$ satisfying the duals of the usual identities for the inverse map of a groupoid.

A co-groupoid $\Q$ is a \emph{co-equivalence relation} if it is a co-preorder.
\end{definition}

\begin{remark}In a co-preorder, the co-composition $q$ is uniquely determined by the maps $l,r,i$; likewise, in a co-groupoid, the co-inverse map $s$ is determined by the rest of the structure.
\end{remark}

Together with the obvious maps, these give categories and full inclusions
$$ \xymatrix{ \coeqrel[\E] \ar@{^{(}->}[r] & \copreord[\E] \ar@{^{(}->}[r] & \cocat[\E].} $$

\begin{example}If $\E$ has all (or enough) pushouts and $m:S \into A$ is any monomorphism, then the \emph{co-kernel pair} of $m$ is a co-equivalence relation 
$$ \xymatrix{ A \ar@/^/[rr]^{\nu_1} \ar@/_/[rr]_{\nu_2} & & A +_S A \ar[ll]|{[1_A,1_A]} \ar[rr]|-{[\nu_1,\nu_3]} & & A +_S A +_S A. } $$
This gives the object part of a functor $\mono[\E] \to \coeqrel[\E]$, which (almost by definition) is one half of an equivalence whenever $\E$ is co-exact.  (Here $\mono[\E]$ denotes the full subcategory of $\E^\arr$ on monomorphisms.)
\end{example}

\begin{example}A paradigmatic example is the interval $\I$ in $\Top$, where $I^0$ is a singleton, $I^1$ is the unit interval, $l$ and $r$ are the endpoints, $I^1 +_{I^0} I^1$ is two copies of the interval joined end to end, and $q$ is the obvious ``stretching'' map.  Unfortunately, this is also of course not an actual co-category --- the axioms hold only up to homotopy.  However, it provides a very useful mental picture for the arguments below; and if we delete the interior of the interval, we obtain a genuine co-category. See also the examples below for more versions of the interval.
\[ \begin{xy}
(-20,0)*+{\bullet}="p"; 
{ \ar^{\textstyle l} (-15,2)*{} ; (-5,6)*{} }; 
(-9.5,0)*+{i}="i" ; (-5,0)*{} **\dir{-} ;
{ \ar "i" ; (-15,0)*{} }; 
{ \ar_{\textstyle r} (-15,-2)*{} ; (-5,-6)*{} }; 
(0,8)*+{\bullet}="l"; (0,-8)*+{\bullet}="r" **\dir{-};
{\ar (5,7.5)*{} ; (15,12)*{} } ;
{\ar@{.} (5,5)*{} ; (15,8)*{} } ;
{\ar@{.} (5,2.5)*{} ; (15,4)*{} } ;
(5,0)*{} ; (9.5,0)*+{q}="q" **\dir{.}; {\ar@{.} "q" ; (15,0)*{} } ;
{\ar@{.} (5,-2.5)*{} ; (15,-4)*{} } ;
{\ar@{.} (5,-5)*{} ; (15,-8)*{} } ;
{\ar (5,-7.5)*{} ; (15,-12)*{} } ;
(20,-16)*+{\bullet}="l2"; (20,0)*+{\bullet} **\dir{-}; (20,16)*+{\bullet} **\dir{-};
\end{xy} \]
\end{example}

\begin{definition}A \emph{coherent category} is a category with all finite limits, and images and unions that are stable under pullback. 
\end{definition}

\cite[A1.3--4]{eleph} gives various basic results on coherent categories, which we will use here without comment.

\begin{definition}\emph{Coherent logic} is the fragment of first-order logic built up from atomic formul\ae{} using finite con-/dis-junction and existential quantification.
\end{definition}

Coherent logic is discussed in \cite[D1.1--2]{eleph}; the essential point is that coherent logic can be interpreted soundly in coherent categories, and so may be used as an internal language for working in them.

\begin{proposition} \label{prop:cat-is-preord} In a coherent category $\E$, every co-category $\Q$ is a co-equivalence relation.
\end{proposition}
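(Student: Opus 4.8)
The plan is to reason in the internal coherent logic of $\E$, treating $Q^0$ and $Q^1$ as sets and the structure maps as functions. The one extra ingredient needed is that the coprojections $\nu_1,\nu_2\colon Q^1\to Q^1+_{Q^0}Q^1$ of the pushout $Q^1+_{Q^0}Q^1$ are jointly epimorphic; indeed, since $\E$ is regular, they form a cover. This is forced by the universal property of the pushout: any two maps out of $Q^1+_{Q^0}Q^1$ agreeing after composition with $\nu_1$ and with $\nu_2$ are equal, so $\{\nu_1,\nu_2\}$ is jointly strongly epic, hence jointly regular epic. Internally this says: for every $w\colon Q^1+_{Q^0}Q^1$, either $w=\nu_1 u$ for some $u$ or $w=\nu_2 v$ for some $v$.

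The co-preorder property is then immediate. Given $x\colon Q^1$, apply the above to $w:=q(x)$. If $q(x)=\nu_1(u)$, the unit law $[li,1]\circ q=1_{Q^1}$ gives $x=[li,1](\nu_1 u)=l(i(u))$, so $x$ lies in the image of $l$; if $q(x)=\nu_2(v)$, the unit law $[1,ri]\circ q=1_{Q^1}$ gives $x=r(i(v))$, so $x$ lies in the image of $r$. Hence internally $(\exists a.\ x=la)\vee(\exists b.\ x=rb)$ holds for all $x\colon Q^1$; that is, $l$ and $r$ are jointly epimorphic, so $\Q$ is a co-preorder. (This is exactly the ``stretch, then project one half of the doubled interval back'' picture suggested above.)

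For the co-groupoid structure I would use this cover to define $s\colon Q^1\to Q^1$ by cases: set $s(x):=r(i(x))$ when $x=l(a)$, and $s(x):=l(i(x))$ when $x=r(b)$ (here $i(x)$ recovers $a$, resp.\ $b$, because $il=ir=1_{Q^0}$). To see this defines a morphism one checks the clauses agree on the overlap: if $x=l(a)=r(b)$ then $l(i(x))=l(a)=x=r(b)=r(i(x))$, so both clauses return $x$. Externally this is the assertion that $[r,l]\colon Q^0+Q^0\to Q^1$ coequalizes the kernel pair of the cover $[l,r]\colon Q^0+Q^0\to Q^1$, and so factors through it as $s\circ[l,r]=[r,l]$. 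By construction $sl=r$ and $sr=l$, and the remaining co-groupoid axioms follow formally: $s\circ s=1_{Q^1}$ because $ssl=sr=l$ and $ssr=sl=r$ while $l,r$ are jointly epic, and the axioms relating $s$ to $q$ (the duals of $gg^{-1}=1$ and $g^{-1}g=1$, e.g.\ $[1,s]\circ q=l\circ i$) are checked by precomposing with $l$ and with $r$ and invoking the unit laws $ql=\nu_1 l$, $qr=\nu_2 r$ together with $il=ir=1$. Then $\Q$ is a co-groupoid, and being also a co-preorder it is a co-equivalence relation.

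The main point to get right is the construction of $s$: one must remember that a co-inverse is part of what has to be produced, and verify that the two defining clauses agree on their overlap (which is where $il=ir=1_{Q^0}$ enters). The co-preorder half is an immediate consequence of the pushout coprojections being jointly epic, and once $s$ is available the co-groupoid axioms are routine diagram-chases against the jointly epic pair $l,r$.
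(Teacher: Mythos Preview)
Your argument is essentially the paper's, and the co-preorder half is correct as written. The one real slip is in your external construction of $s$: you invoke $Q^0+Q^0$ and the regular-epi factorization of the cover $[l,r]\colon Q^0+Q^0\to Q^1$, but a coherent category need not have binary coproducts, so $Q^0+Q^0$ may simply fail to exist and there is nothing to take the kernel pair of. The correct externalization of your ``define by cases and check the overlap'' is precisely what the paper does: since $l,r$ are split monos (because $il=ir=1$) whose union is $Q^1$, the pullback square
$$\xymatrix{ \bullet \ar[r]^{\pi_2} \ar[d]_{\pi_1} & Q^0 \ar[d]^{r} \\ Q^0 \ar[r]^l & Q^1}$$
is also a \emph{pushout} (unions of subobjects in a coherent category are effective in this sense), so producing $s$ with $sl=r$ and $sr=l$ reduces to checking $r\pi_1=l\pi_2$ on the intersection---which is exactly your overlap computation $l(i(x))=x=r(i(x))$, rephrased. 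With that correction your proof coincides with the paper's.
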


\begin{proof}First, we show that any co-category $\Q$ is a co-preorder.

Arguing in the internal logic: given $x$ in $Q^1$, consider $q(x)$, in $Q^1 +_{Q_0} Q^1$.  Either there is some $y$ in $Q^1$ with $q(x) = \nu_1(y)$, or else some $y$ with $q(x) = \nu_2(y)$.  In the first case, we then have $x = [li,1]q(x) = li(y)$; in the second, $x = ri(y)$.  Thus any $x$ in $Q^1$ is in the image of either $l$ or $r$, i.e.\ $l$ and $r$ are jointly covering, hence epi.  (Indeed, in the first case $x = li(y) = l(il)i(y) = li(li(y)) = li(x)$, and in the second, $x = ri(x)$.)

Restating this diagrammatically: $Q^1 +_{Q^0} Q^1$ is the union of the subobjects $\nu_j: Q^1 \rightarrow Q^1 +_{Q^0} Q^1$, so $Q^1$ is the union of the subobjects $m_j = q^*(\nu_j)$: 
$$\qquad \xymatrix{ P_j \ar[r]^{q_j} \ar[d]^{m_j} & Q^1 \ar[d]^{\nu_j} \\ Q^1 \ar[r]^-q & Q^1 +_{Q^0} Q^1}$$
In particular, the pair $m_1,m_2$ are jointly covering. But by the co-unit identities, $liq_1 = [li,1]\nu_1 q_1 = [li,1]q m_1 = m_1$, and similarly $riq_2 = m_2$.  Thus $liq_1, riq_2$ are jointly covering, and hence so are $l,r$. 

Now, we check that any co-preorder is a co-equivalence relation.  (We give only the diagrammatic version.  Exercise: restate this in the internal logic!)  We want to define $s:Q^1 \rightarrow Q^1$ with $sl = r$, $sr = l$. Since $l,r$ are monos with union $Q^1$, the pullback square
$$\xymatrix{ \bullet \ar[r]^{\pi_2} \ar[d]_{\pi_1} & Q^0 \ar[d]^{r} \\ Q^0 \ar[r]^l & Q^1}$$
is also a pushout, so to construct $s$ as above, it is enough to show that $r \pi_1 = l \pi_2$.  But $ \pi_1 = il\pi_1 = ir\pi_2 = \pi_2$, so $r\pi_1 = r\pi_2 = l \pi_1 = l \pi_2$, and we are done.
\end{proof}

\begin{corollary}If $\E$ is a coherent category with co-kernel pairs of monos, then $\cocat[\E] \equiv \mono[\E]$. (In particular, this holds if $\E$ is a pretopos \cite[A1.4.8]{eleph}.)
\end{corollary}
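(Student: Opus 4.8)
The plan is to read the Corollary off from Proposition~\ref{prop:cat-is-preord} together with the co-kernel pair construction of the Example. First note that being a co-preorder, and being a co-groupoid, are \emph{properties} of a co-category rather than extra structure (the latter by the Remark, since $s$ is determined by the rest of the data). So Proposition~\ref{prop:cat-is-preord} says exactly that the full inclusions $\coeqrel[\E] \hookrightarrow \copreord[\E] \hookrightarrow \cocat[\E]$ are surjective on objects, hence are isomorphisms of categories; in particular $\cocat[\E] = \coeqrel[\E]$. It therefore suffices to produce an equivalence $\coeqrel[\E] \simeq \mono[\E]$, and the natural candidate in one direction is the co-kernel pair functor $K : \mono[\E] \to \coeqrel[\E]$ of the Example, which exists since $\E$ has co-kernel pairs of monos.

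For the other direction I would define $J : \coeqrel[\E] \to \mono[\E]$ by $\Q \mapsto (m_\Q : S_\Q \into Q^0)$, where $S_\Q = Q^0 \times_{Q^1} Q^0$ is the pullback of $l$ along $r$ and $m_\Q$ is the common value of its two projections. That this makes sense is exactly the analysis inside the proof of Proposition~\ref{prop:cat-is-preord}: the co-unit identities $il = 1 = ir$ make $l,r$ split monos with a common retraction, so the projections $\pi_1,\pi_2$ of the pullback are monic and satisfy $\pi_1 = il\pi_1 = ir\pi_2 = \pi_2$. Functoriality of $J$ is routine, a morphism of co-equivalence relations commuting with $l,r$ and so restricting to the pullbacks.

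It then remains to check that $K$ and $J$ are quasi-inverse. For $KJ \cong 1$: given a co-equivalence relation $\Q$, the maps $l,r$ are jointly epic monos, so — exactly as in the proof of Proposition~\ref{prop:cat-is-preord}, where the pullback square defining $S_\Q$ is seen to be a pushout — effectivity of unions in the coherent category $\E$ gives $Q^1 = l(Q^0)\cup r(Q^0) \cong Q^0 +_{S_\Q} Q^0$, and by the Remark one needs only match up $l,r,i$ to identify $\Q$ with the co-kernel pair of $m_\Q$. For $JK \cong 1$: given a mono $m : S \into A$, $JK(m)$ is the pullback of the coprojections $\nu_1,\nu_2 : A \rightrightarrows A +_S A$; these are again jointly epic monos, the co-kernel pair being a co-equivalence relation by the Proposition, so the same effective-union computation recovers $S$ — equivalently, the co-kernel pair square of a mono is bicartesian. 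Naturality of both isomorphisms is straightforward, and the parenthetical pretopos case follows since \cite[A1.4.8]{eleph} supplies co-kernel pairs of monos there.

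I expect the one place where more than bookkeeping is needed to be the claim that the co-kernel pair square of a mono is a pullback (i.e.\ $JK \cong 1$); everything else is a direct translation of the Proposition and the Example. This is where coherence is really being spent, via effectivity of unions already invoked in the proof of Proposition~\ref{prop:cat-is-preord}: a priori (cf.\ the Example) one would get only half an equivalence, and only for $\E$ co-exact, but the rigidity forced by the Proposition — co-equivalence relations in $\E$ being essentially just complementary pairs of split monos — collapses that hypothesis down to bare coherence.
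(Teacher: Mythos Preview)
Your approach is an explicit unpacking of the paper's one-line argument (``coherent $\Rightarrow$ co-effective; with co-kernel pairs $\Rightarrow$ co-exact''): you build the candidate quasi-inverses $K$ (co-kernel pair) and $J$ (pullback, equivalently equalizer, of $l,r$) by hand and check the two triangle identities. The $KJ\cong 1$ direction is exactly co-effectiveness, and your argument via effectivity of unions is correct and is precisely the pushout square already exhibited in the proof of Proposition~\ref{prop:cat-is-preord}.

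The gap is at $JK\cong 1$, which you rightly flag as the one substantive step --- but your justification does not close it. Applying effectivity of unions to the jointly epic monos $\nu_1,\nu_2:A\rightrightarrows A+_S A$ shows only that their pullback $E\hookrightarrow A$ makes that square a pushout; the defining square with $S$ was \emph{already} a pushout, and distinct subobjects $S\subseteq E\subseteq A$ may well have isomorphic pushouts $A+_S A\cong A+_E A$. What $JK\cong 1$ actually requires is that the co-kernel-pair square of $m$ be a \emph{pullback}, i.e.\ that $m$ be the equalizer of its co-kernel pair --- that $m$ be a regular mono. This holds in a pretopos, so the parenthetical case goes through, but it can fail for a bare coherent category with co-kernel pairs: the three-element chain $0<1<2$ is a distributive lattice, hence coherent, and has all pushouts, yet the co-kernel pair of $0\into 1$ is the trivial co-category on $1$, the same as that of $1_1$, so $K$ identifies non-isomorphic monos and cannot be an equivalence. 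The paper's proof is no more explicit than yours at this point; the robust formulation replaces $\mono[\E]$ by its full subcategory on regular monomorphisms, which coincides with $\mono[\E]$ exactly in the pretopos case.
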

\begin{proof}
A coherent category is certainly co-effective, so if it has co-kernel pairs, it is co-exact.
\end{proof}

\begin{corollary}For any topos $\E$, $\cocat[\E] \equiv (\E/\Omega)_\colax$.\hfill $\Box$
\end{corollary}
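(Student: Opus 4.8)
The plan is to factor the claimed equivalence through $\mono[\E]$. Any topos is a pretopos, so the previous corollary already gives $\cocat[\E] \simeq \mono[\E]$; hence it suffices to produce an equivalence $\mono[\E] \simeq (\E/\Omega)_\colax$, and this is exactly the step where the topos hypothesis — specifically, the presence of a subobject classifier $\Omega$ — does its work.

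Concretely, I would define a functor $F\colon \mono[\E] \to (\E/\Omega)_\colax$ sending a monomorphism $m\colon S \into A$ to its classifying map $\chi_m\colon A \to \Omega$, regarded as an object of the slice, and sending a morphism of monos — a commutative square in $\E^\arr$ with bottom edge $f\colon A \to B$ — to the underlying map $f$. The key observation is that such a square exists (necessarily uniquely, since $T \into B$ is monic) precisely when $S \le f^{*}T$ in $\mathrm{Sub}(A)$, i.e.\ precisely when $\chi_m \le \chi_{m'}\circ f$ pointwise as maps $A \to \Omega$; and this inequality — $\Omega$ being an internal poset via its Heyting order — is exactly the $2$-cell datum of a colax morphism $(A,\chi_m) \to (B,\chi_{m'})$ in $\E/\Omega$. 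Functoriality of $F$ then follows from pointwise transitivity of $\le$.

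To see that $F$ is an equivalence: it is essentially surjective, since every $a\colon A \to \Omega$ is the classifying map of $a^{*}(\top) \into A$; and it is full and faithful, since by the uniqueness just noted the induced map $\mathrm{Hom}_{\mono[\E]}(m,m') \to \{\, f\colon A \to B \mid \chi_m \le \chi_{m'}f \,\}$ is a bijection. (With a fixed choice of classifier one in fact obtains an isomorphism onto the evident subcategory, but an equivalence is all that is required.)

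The functoriality and the hom-set bijection are routine; the only point needing a moment's care is bookkeeping the variance conventions — confirming that the "$\colax$" in $(\E/\Omega)_\colax$ refers to a filler $\chi_m \Rightarrow \chi_{m'}f$ (rather than its opposite) and that the relevant internal order on $\Omega$ is the Heyting one, so that a colax morphism genuinely unwinds to the subobject inequality $S \le f^{*}T$. Once the definition of $(\E/\Omega)_\colax$ is pinned down in these terms, I expect no further obstacle.
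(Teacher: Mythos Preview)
Your proposal is correct and spells out exactly the argument the paper leaves implicit: the paper gives no proof beyond the tombstone, relying on the previous corollary for $\cocat[\E]\simeq\mono[\E]$ and on the standard bijection between monomorphisms and classifying maps---with the colax condition defined immediately afterward as $\varphi\leq_\Omega\psi f$---for $\mono[\E]\simeq(\E/\Omega)_\colax$. Your verification that the inequality $\chi_m\leq\chi_{m'}f$ is equivalent to $S\leq f^*T$, and your bookkeeping of the variance, match the paper's stated convention precisely.
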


(A \emph{colax} map $(X,\varphi) \to (Y,\psi)$ is a map $f:X \to Y$ such that $\varphi \leq_\Omega \psi f$.)

 In particular, inspecting this equivalence, we see that in this case there is a universal internal co-category in $\E$, from which every co-category in $\E$ may be obtained uniquely by pullback: it is the co-kernel pair of $\top: 1 \rightarrow \Omega$.

\begin{example} The condition that unions are preserved by pullback is crucial: $\AbGp$, for instance, is regular, and has unions, but there is a non-co-preorder co-category corresponding to the interval pictured above, given by the objects
$$Q^0 = \langle v_0 \rangle \qquad Q^1 = \langle v_0, e_1, v_1 \rangle \qquad Q^1 +_{Q^0} Q^1 = \langle v_0, e_1, v_1, e_2, v_2 \rangle$$
(with the natural maps making this a pushout), and maps given by the matrices
$$l = \left( \begin{array}{c} 1 \\ 0 \\ 0 \end{array} \right) \quad
r = \left( \begin{array}{c} 0 \\ 0 \\ 1 \end{array} \right) \quad
i = \left( \begin{array}{c c c} 1 & 0 & 1 \end{array} \right) \quad
q = \left( \begin{array}{c c c} 1 & 0 & 0 \\ 0 & 1 & 0 \\ 0 & 0 & 0 \\ 0 & 1 & 0 \\ 0 & 0 & 1 \end{array} \right) .$$

This example may be given more structure; it is, for instance, the total space of an natural co-category in $\Ch(\AbGp)$.  Since all the underlying groups are free and of finite rank, dualising by transposing matrices also gives corresponding categories in $\AbGp$ and $\Ch(\AbGp)$.
\end{example}

However, any category in a Mal'cev category is a groupoid (this has been observed by various authors, e.g.\ in \cite{carboni-kelly-pedicchio}), so any co-category in a co-Mal'cev category (e.g.\ in an Abelian category, or a topos \cite{bourn-paper}) is a co-groupoid.

\begin{example}An example of a non-co-groupoid co-category is the interval $\I$ in $\Cat$, with $I^0 = ( \cdot ) $, $I^1 = ( \cdot \rightarrow \cdot )$; seen as a co-simplicial object, this is just the usual inclusion functor $\Delta \hookrightarrow \PreOrd \hookrightarrow \Cat$.

Indeed, the functor $\Cat \rightarrow \SSet \rightarrow \SAbGp \rightarrow \Ch(\AbGp) \rightarrow$ $\Ch(\AbGp)$ ``take nerve; take free abelian groups; normalise to a complex; quotient out by subcomplex generated in degrees $\geq$ 2'' sends $\I$ to the co-category in $\Ch(\AbGp)$ of the previous example. 
\end{example}

Co-categories arise as candidate ``interval objects'' when using 2-categories to model intensional type theory \cite{awodey-warren}.  There, one seeks them in categories with some sort of ``weakened'' logical structure; the present result confirms the suspicion that examples in classical ``strict'' logical categories are necessarily fairly trivial.

Many thanks are due to Steve Awodey, for originally posing the question of what co-categories could exist in a topos, to Peter Johnstone, for suggesting and improving parts of the proofs.

\end{document}